\DeclareMathOperator*{\regprod}{\mathchoice%
{\ooalign{\hbox{$\displaystyle\prod$}\crcr\hbox{$\displaystyle\coprod$}}}
{\ooalign{\hbox{$\textstyle\prod$}\crcr\hbox{$\textstyle\coprod$}}}
{\ooalign{\hbox{$\scriptstyle\prod$}\crcr\hbox{$\scriptstyle\coprod$}}}
{\ooalign{\hbox{$\scriptscriptstyle\prod$}\crcr\hbox{$\scriptscriptstyle\coprod$}}}
}
\numberwithin{equation}{section}
\theoremstyle{plain}
\newtheorem{theorem}{Theorem}
\newtheorem{proposition}{Proposition}
\newtheorem{remark}{Remark}
\newtheorem{example}{Examples}
\begin{document}

\title{Transcendency of the determinant of the Riemann operator: on higher $K$--groups}

\author{Nobushige Kurokawa\footnote{Department of Mathematics, Tokyo Institute of Technology} \and Hidekazu Tanaka\footnote{6-15-11-202 Otsuka, Bunkyo-ku, Tokyo}}

\date{October 10, 2022}



\maketitle

\begin{abstract}
In previous papers we investigated basic properties of the determinant $G_{K}(s)$ of the Riemann operator: ${\mathcal R}$ acting on $\bigoplus_{n>1} K_{n}(A)_{\mathbb{C}}$, where $A$ is the integer ring of an algebraic number field $K$. The function $G_{K}(s)$ is defined as the regularized determinant
\[
G_{K}(s) = {\rm det} ((s I-\mathcal{R}) | \bigoplus_{n>1} K_{n}(A)_{\mathbb{C}} )
\]
with $\mathcal{R} | K_{n}(A)_{\mathbb{C}} = \frac{1-n}{2}$.

We showed that $G_{K}(s)^{-1}$ is essentially the so called gamma factors of Dedekind zeta function of $K$.

In this paper we study the transcendency of $G_{K}(s)$ for some rational numbers $s$. The result depends on types of $K$. For example, we show that $G_{K}(\frac{1}{3})$ is a transcendental number if $K$ is a totally imaginary and $G_{K}(\frac{1}{2})$ is a transcendental number otherwise.

\end{abstract}

\section*{Introduction} Let $A$ be the integer ring of an algebraic number field $K$. We denote by $K_{n}(A)$ the higher $K$--group of $A$ constructed by Quillen. We define the Riemann operator ${\mathcal R}$ on $K_{n}(A)$ as $\mathcal{R} | K_{n}(A) = \frac{1-n}{2}$. 

In \cite{KT1} \cite{KT2} we studied the regularized determinant
\begin{align*}
G_{K}(s) &= {\rm det} ((s I-\mathcal{R}) | \bigoplus_{n>1} K_{n}(A)_{\mathbb{C}} )\\
&= \regprod_{n>1} (s-\frac{1-n}{2})^{{\rm rank} K_{n}(A)}\\
&= \regprod_{n>1} (\frac{n-1}{2}+s)^{{\rm rank} K_{n}(A)}\\
&= \exp( -\frac{\partial}{\partial w} \varphi(w,s)\biggl|_{w=0} ),
\end{align*}
where 
\[
\varphi(w,s)=\sum_{n>1} {\rm rank}K_{n}(A)(\frac{n-1}{2}+s)^{-w}.
\]
We know from Borel \cite{B} that
\[
{\rm rank} K_{n}(A) = \left\{
\begin{array}{ccc}
r_{1}+r_{2} & {\rm if} & n \equiv 1 \; {\rm mod} \; 4,\\
r_{2} & {\rm if} & n \equiv 3 \; {\rm mod} \; 4,\\
0 & & {\rm otherwise},
\end{array}
\right.
\]
where $r_{1}$ (resp. $r_{2}$) is the number of real (resp. complex) places of $K$. Hence,
\[
\varphi(w,s)=(r_{1}+r_{2})\sum_{n>1 \atop n \equiv 1 \; {\rm mod} \; 4} (\frac{n-1}{2}+s)^{-w} + r_{2} \sum_{n \equiv 3 \; {\rm mod} \; 4} (\frac{n-1}{2}+s)^{-w}.
\]
Calculations of $G_{K}(s)$ were done in \cite{KT1} \cite{KT2} as
\[
G_{K}(s)^{-1} = \biggl( 2^{\frac{s}{2}} \frac{\Gamma(\frac{s+2}{2})}{\sqrt{\pi}} \biggl)^{r_{1}+r_{2}} \biggl( 2^{\frac{s}{2}} \frac{\Gamma(\frac{s+1}{2})}{\sqrt{2\pi}} \biggl)^{r_{2}}.
\]
To express $G_{K}(s)$ more neatly we use notations 
\[
\zeta_{{\mathbb R}}(s) = \biggl( \regprod_{n=0}^{\infty}(2n+s) \biggl)^{-1}=\frac{\Gamma(\frac{s}{2})}{\sqrt{2 \pi}} 2^{\frac{s-1}{2}}
\]
and
\[
\zeta_{{\mathbb C}}(s) = \biggl( \regprod_{n=0}^{\infty}(n+s) \biggl)^{-1}=\frac{\Gamma(s)}{\sqrt{2 \pi}}.
\]
We refer to Manin \cite{M} for regularized products. We prove the following relation:

\begin{theorem} 
\[
\zeta_{{\mathbb C}}(s) = \zeta_{{\mathbb R}}(s) \zeta_{{\mathbb R}}(s+1).
\]
This is equivalent to
\[
\Gamma_{{\mathbb C}}(s) = \Gamma_{{\mathbb R}}(s) \Gamma_{{\mathbb R}}(s+1),
\]
where
\[
\Gamma_{{\mathbb R}}(s) = \Gamma(\frac{s}{2}) \pi^{-\frac{s}{2}}
\]
and
\[
\Gamma_{{\mathbb C}}(s) = \Gamma(s) 2 (2\pi)^{-s}
\]
are usual notations for gamma factors.
\end{theorem}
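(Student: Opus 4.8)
The plan is to derive the identity directly from the definitions of $\zeta_{\mathbb{R}}$ and $\zeta_{\mathbb{C}}$ as regularized products, by splitting the defining spectral series according to the parity of the index. Writing the zeta regularization of $\zeta_{\mathbb{C}}(s)^{-1} = \regprod_{n \geq 0}(n+s)$ through $Z(w,s) = \sum_{n \geq 0}(n+s)^{-w}$, I would partition the summation index $\{n \geq 0\}$ into the even indices $n = 2m$ and the odd indices $n = 2m+1$. In the half-plane of absolute convergence this is a trivial regrouping of one series, giving
\[
Z(w,s) = \sum_{m \geq 0}(2m+s)^{-w} + \sum_{m \geq 0}(2m+(s+1))^{-w},
\]
and, since each of the three series continues to a function holomorphic at $w=0$, the identity persists by analytic continuation.

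The key step is to convert this additive identity of spectral series into a multiplicative identity of regularized products. Since $\regprod_n a_n = \exp(-\tfrac{\partial}{\partial w}|_{w=0}\sum_n a_n^{-w})$, differentiating the displayed identity at $w=0$ distributes across the sum, and exponentiating gives
\[
\regprod_{n \geq 0}(n+s) = \Bigl(\regprod_{m \geq 0}(2m+s)\Bigr)\Bigl(\regprod_{m \geq 0}(2m+(s+1))\Bigr).
\]
By the definition of $\zeta_{\mathbb{R}}$ the first factor on the right is $\zeta_{\mathbb{R}}(s)^{-1}$ and the second, with argument $s+1$, is $\zeta_{\mathbb{R}}(s+1)^{-1}$, so $\zeta_{\mathbb{C}}(s)^{-1} = \zeta_{\mathbb{R}}(s)^{-1}\zeta_{\mathbb{R}}(s+1)^{-1}$, which is the asserted relation. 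The main obstacle is exactly this passage: regularized products are not multiplicative in general, and splitting one off into two can introduce an anomalous factor. What makes it safe here is that the decomposition is a genuine disjoint partition of the index set, so the spectral series add on the nose rather than merely up to an entire correction; the only thing to verify is that each even-shifted series (a rescaled Hurwitz zeta) is holomorphic at $w=0$, so that $\tfrac{\partial}{\partial w}|_{w=0}$ genuinely distributes.

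Finally, the equivalence with the gamma-factor form is routine. Substituting the closed forms $\zeta_{\mathbb{R}}(s) = \tfrac{\Gamma(s/2)}{\sqrt{2\pi}} 2^{(s-1)/2}$ and $\zeta_{\mathbb{C}}(s) = \tfrac{\Gamma(s)}{\sqrt{2\pi}}$ and collecting the powers of $2$ and $\pi$, the relation $\zeta_{\mathbb{C}}(s) = \zeta_{\mathbb{R}}(s)\zeta_{\mathbb{R}}(s+1)$ reduces precisely to the Legendre duplication formula $\Gamma(s) = 2^{s-1}\pi^{-1/2}\Gamma(s/2)\Gamma(\tfrac{s+1}{2})$; rewriting the same identity in the normalizations $\Gamma_{\mathbb{R}}(s) = \Gamma(s/2)\pi^{-s/2}$ and $\Gamma_{\mathbb{C}}(s) = 2(2\pi)^{-s}\Gamma(s)$ yields $\Gamma_{\mathbb{C}}(s) = \Gamma_{\mathbb{R}}(s)\Gamma_{\mathbb{R}}(s+1)$. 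Thus one may alternatively read the theorem as a regularized-product reformulation and re-derivation of classical Legendre duplication; I would present the splitting argument as the main proof and note the closed-form substitution as an immediate cross-check.
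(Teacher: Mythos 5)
Your main argument is correct and is essentially the paper's first proof: the paper splits the regularized product as $\bigl(\regprod_{m\,\mathrm{even}}(m+s)\bigr)\bigl(\regprod_{m\,\mathrm{odd}}(m+s)\bigr)=\regprod_{m=0}^{\infty}(m+s)$, which is exactly your even/odd partition. The paper asserts this factorization outright, while you supply the justification (the Hurwitz series add on the nose because the partition of the index set is disjoint, each piece is holomorphic at $w=0$, so $\frac{\partial}{\partial w}\bigl|_{w=0}$ distributes and exponentiation turns the sum into a product); that added care is welcome, since multiplicativity of regularized products is precisely the non-trivial point. The paper also gives a second, different proof via the absolute automorphic form identity $f_{{\mathbb R}}(x)(1+x^{-1})=f_{{\mathbb C}}(x)$, which yields $Z_{f_{\mathbb R}}(w,s)+Z_{f_{\mathbb R}}(w,s+1)=Z_{f_{\mathbb C}}(w,s)$ at the integral level; your series identity is the same statement reached by expanding the geometric series instead. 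Where you genuinely diverge is the equivalence with the gamma factors: you substitute the closed forms and reduce both identities to Legendre duplication, so your ``equivalence'' is obtained by proving each statement outright, whereas the paper derives
\[
\frac{\Gamma_{{\mathbb R}}(s)\Gamma_{{\mathbb R}}(s+1)}{\Gamma_{{\mathbb C}}(s)}=\frac{\zeta_{{\mathbb R}}(s)\zeta_{{\mathbb R}}(s+1)}{\zeta_{{\mathbb C}}(s)}
\]
from the elementary rescalings $\Gamma_{{\mathbb R}}(s)=\zeta_{{\mathbb R}}(s)\,2^{1-\frac{s}{2}}\pi^{\frac{1-s}{2}}$ and $\Gamma_{{\mathbb C}}(s)=\zeta_{{\mathbb C}}(s)\,2^{\frac{3}{2}-s}\pi^{\frac{1}{2}-s}$, which establishes the equivalence as a genuine conditional statement without invoking duplication or the truth of either identity (duplication appears only in a closing remark). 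Both routes are valid; yours has the side benefit of exhibiting the theorem as a regularized-product re-derivation of Legendre duplication, while the paper's keeps the equivalence logically independent of the proof of the identity itself.
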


Then, we can express $G_{K}(s)^{-1}$ as follows.

\begin{theorem}
\[
G_{K}(s)^{-1} = \zeta_{{\mathbb R}}(s+2)^{r_{1}} \zeta_{{\mathbb C}}(s+1)^{r_{2}}.
\]
\end{theorem}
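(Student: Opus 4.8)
The plan is to recognize the two bracketed factors in the explicit formula for $G_{K}(s)^{-1}$ recorded above as shifted values of $\zeta_{{\mathbb R}}$, and then to collapse the resulting product by invoking Theorem 1. Since both $\zeta_{{\mathbb R}}$ and $\zeta_{{\mathbb C}}$ already have closed forms in terms of $\Gamma$, this reduces the statement to a routine comparison of gamma-factor expressions together with a single application of the relation $\zeta_{{\mathbb C}}(s) = \zeta_{{\mathbb R}}(s)\zeta_{{\mathbb R}}(s+1)$.

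First I would evaluate $\zeta_{{\mathbb R}}$ at the two relevant shifts. Starting from $\zeta_{{\mathbb R}}(s) = \frac{\Gamma(\frac{s}{2})}{\sqrt{2\pi}} 2^{\frac{s-1}{2}}$, a direct substitution should give
\[
\zeta_{{\mathbb R}}(s+2) = 2^{\frac{s}{2}} \frac{\Gamma(\frac{s+2}{2})}{\sqrt{\pi}}, \qquad \zeta_{{\mathbb R}}(s+1) = 2^{\frac{s}{2}} \frac{\Gamma(\frac{s+1}{2})}{\sqrt{2\pi}},
\]
where in the first case the shift turns $2^{\frac{s-1}{2}}$ into $2^{\frac{s+1}{2}}$, and the surplus factor $\sqrt{2}$ cancels one $\sqrt{2}$ from $\sqrt{2\pi}$ to leave $2^{\frac{s}{2}}/\sqrt{\pi}$; in the second case the shift produces exactly $2^{\frac{s}{2}}$ while the $\sqrt{2\pi}$ survives. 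These are precisely the two bracketed factors appearing in the quoted expression for $G_{K}(s)^{-1}$.

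Substituting these identifications, the formula should read
\[
G_{K}(s)^{-1} = \zeta_{{\mathbb R}}(s+2)^{r_{1}+r_{2}} \, \zeta_{{\mathbb R}}(s+1)^{r_{2}} = \zeta_{{\mathbb R}}(s+2)^{r_{1}} \bigl( \zeta_{{\mathbb R}}(s+1) \zeta_{{\mathbb R}}(s+2) \bigr)^{r_{2}},
\]
where I split the exponent $r_{1}+r_{2}$ on $\zeta_{{\mathbb R}}(s+2)$ so that exactly $r_{2}$ copies pair with $\zeta_{{\mathbb R}}(s+1)^{r_{2}}$. Finally I would apply Theorem 1 in the shifted form $\zeta_{{\mathbb C}}(s+1) = \zeta_{{\mathbb R}}(s+1)\zeta_{{\mathbb R}}(s+2)$ to rewrite the inner product as $\zeta_{{\mathbb C}}(s+1)$, which yields the claimed identity.

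There is no analytic obstacle in this argument; everything reduces to elementary manipulation of the powers of $2$ and $\pi$ in the closed forms. The only point requiring care is the regrouping of exponents in the last step, where the contribution of the complex places must be assembled from one factor taken over all places with exponent $r_{1}+r_{2}$ and one factor taken over the complex places with exponent $r_{2}$. The entire mathematical content is carried by the functional relation of Theorem 1, so the present theorem is essentially a repackaging of the earlier determinant computation into a form that cleanly separates the real and complex places.
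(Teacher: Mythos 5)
Your proposal is correct and follows the same route as the paper: identify the two bracketed factors in the formula from [KT2] as $\zeta_{\mathbb{R}}(s+2)$ and $\zeta_{\mathbb{R}}(s+1)$, regroup the exponent $r_{1}+r_{2}$ as $r_{1}$ plus $r_{2}$, and apply Theorem 1 in the shifted form $\zeta_{\mathbb{C}}(s+1)=\zeta_{\mathbb{R}}(s+1)\zeta_{\mathbb{R}}(s+2)$. The only difference is that you spell out the power-of-two bookkeeping that the paper compresses into ``we see easily,'' which is a harmless (indeed helpful) elaboration.
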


Next, we study the transcendency. First we look at $G_{K}(s)$ for $s \in {\mathbb Z}_{\geq 0}$. We obtain the following result. 

\begin{theorem} Let $n \geq 0$ be an integer.
\par (1) Let $n$ be even. Then $G_{K}(n) \notin \overline{{\mathbb Q}}$.
\par (2) Let $n$ be odd. Then $G_{K}(n) \notin \overline{{\mathbb Q}} \Leftrightarrow r_{2} \geq 1$. In other words, $G_{K}(n) \in \overline{{\mathbb Q}} \Leftrightarrow r_{2}=0 \; (K:{\rm totally} \; {\rm real})$.
\end{theorem}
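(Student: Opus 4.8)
The plan is to reduce everything to the closed formula of Theorem 2, namely $G_{K}(n)^{-1} = \zeta_{\mathbb{R}}(n+2)^{r_{1}} \zeta_{\mathbb{C}}(n+1)^{r_{2}}$, and then to evaluate the two archimedean factors at the integer argument using the explicit expressions $\zeta_{\mathbb{R}}(s) = \frac{\Gamma(s/2)}{\sqrt{2\pi}} 2^{(s-1)/2}$ and $\zeta_{\mathbb{C}}(s) = \frac{\Gamma(s)}{\sqrt{2\pi}}$ recorded in the introduction. The complex factor is transparent: since $n+1$ is a positive integer, $\Gamma(n+1) = n!$ is a positive integer, so $\zeta_{\mathbb{C}}(n+1) = \frac{n!}{\sqrt{2\pi}}$ carries a factor $\pi^{-1/2}$ for \emph{every} value of $n$. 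The real factor, by contrast, behaves differently according to the parity of $n$, and this is where the dichotomy originates. Throughout I would use only the transcendence of $\pi$ (Lindemann): any nonzero rational power of $\pi$ is transcendental, the product of a nonzero algebraic number with a transcendental number is transcendental, and the reciprocal of a nonzero transcendental (resp. algebraic) number is again transcendental (resp. algebraic).

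First I would treat the even case $n = 2m$. Here the argument $(n+2)/2 = m+1$ of the gamma function is an integer, so $\Gamma(m+1) = m!$, and a short simplification (using $2^{(2m+1)/2} = 2^{m}\sqrt{2}$) gives $\zeta_{\mathbb{R}}(2m+2) = \frac{2^{m} m!}{\sqrt{\pi}}$, which again carries $\pi^{-1/2}$. Collecting the powers of $\pi$ from the two factors, one finds $G_{K}(n)^{-1} = \alpha \cdot \pi^{-(r_{1}+r_{2})/2}$ with $\alpha$ a nonzero algebraic (indeed positive rational) number. Since every number field has at least one archimedean place, $r_{1}+r_{2} \geq 1$, so the exponent of $\pi$ is a nonzero rational; hence $G_{K}(n)^{-1}$, and therefore $G_{K}(n)$, lies outside $\overline{\mathbb{Q}}$, proving (1).

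For the odd case $n = 2m+1$ the argument of the gamma function in the real factor is the half-integer $(n+2)/2 = m+\frac{3}{2}$, and the classical evaluation $\Gamma(m+\tfrac{3}{2}) = \frac{(2m+2)!}{4^{m+1}(m+1)!}\sqrt{\pi}$ produces a compensating $\sqrt{\pi}$ that cancels against the $\sqrt{\pi}$ hidden inside $\sqrt{2\pi}$. After simplification (using the identity $(2k)! = 2^{k} k!\,(2k-1)!!$) one obtains the purely algebraic value $\zeta_{\mathbb{R}}(2m+3) = \frac{(2m+1)!!}{\sqrt{2}}$, free of any power of $\pi$. Consequently $G_{K}(n)^{-1} = \beta \cdot \pi^{-r_{2}/2}$ with $\beta$ a nonzero algebraic number, so the only surviving power of $\pi$ comes from the $r_{2}$ complex places. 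When $r_{2}=0$ (that is, $K$ totally real) the value is algebraic, and when $r_{2} \geq 1$ the exponent $-r_{2}/2$ is a nonzero rational and the same Lindemann argument forces transcendence; this establishes (2).

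The essential point, and the only genuinely delicate step, is the $\sqrt{\pi}$ cancellation in the odd real factor: it is exactly this cancellation that singles out the totally real case and is responsible for $G_{K}(n)$ being algebraic precisely when $r_{2}=0$ and $n$ is odd. The transcendence inputs are otherwise minimal, amounting only to the bookkeeping that factorials and double factorials are rational while half-integer gamma values are rational multiples of $\sqrt{\pi}$; one has merely to check that the accompanying algebraic factors $\alpha$ and $\beta$ are nonzero, which is immediate since they are positive.
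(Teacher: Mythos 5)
Your proposal is correct and follows essentially the same route as the paper: both apply the formula $G_{K}(n)^{-1}=\zeta_{\mathbb R}(n+2)^{r_{1}}\zeta_{\mathbb C}(n+1)^{r_{2}}$ of Theorem 2, observe that the parity of $n$ decides whether the half-integer gamma value in $\zeta_{\mathbb R}(n+2)$ contributes a $\sqrt{\pi}$, and then collect the powers of $\pi$ (getting $\pi^{(r_{1}+r_{2})/2}$ in the even case and $\pi^{r_{2}/2}$ in the odd case) to conclude via the transcendence of $\pi$. Your only slip is the parenthetical claim that the algebraic factor $\alpha$ is rational --- it can involve $\sqrt{2}$ when $r_{2}$ is odd --- but since your argument only uses that $\alpha$ is nonzero algebraic, nothing is affected.
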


\begin{example}$\;$
\par (a) $G_{K}(0)={\rm det}(-{\mathcal R}) \notin \overline{{\mathbb Q}}$.
\par (b) $G_{K}(1)={\rm det}(I-{\mathcal R}) \notin \overline{{\mathbb Q}} \Leftrightarrow r_{2} \geq 1$.
\par (c) $G_{K}(0)G_{K}(1)={\rm det}(-{\mathcal R}) {\rm det}(I-{\mathcal R})=(\sqrt{2 \pi})^{[K:{\mathbb Q}]} \notin \overline{{\mathbb Q}}$, where $\sqrt{2\pi}=\regprod_{n=1}^{\infty} n = \infty !$.
\end{example}

Lastly we look at some simple $s \in {\mathbb Q}_{>0}$.

\begin{theorem} Let $n \geq 0$ be an integer.
\par (1) For non--totally imaginary $K$ (i.e., $r_{1} \geq 1$), $G_{K}(\frac{1}{2}+n) \notin \overline{{\mathbb Q}}$, and for totally imaginary $K$, $G_{K}(\frac{1}{2}+n) \in \overline{{\mathbb Q}}$.
\par (2) For totally imaginary $K$, $G_{K}(\frac{1}{3}+n) \notin \overline{{\mathbb Q}}$.
\end{theorem}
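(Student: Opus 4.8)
The plan is to substitute the closed form $G_{K}(s)^{-1} = \zeta_{\mathbb{R}}(s+2)^{r_{1}}\zeta_{\mathbb{C}}(s+1)^{r_{2}}$ into the explicit gamma expressions for $\zeta_{\mathbb{R}}$ and $\zeta_{\mathbb{C}}$, and then, in each case, to peel off all algebraic prefactors until a single gamma-quotient remains whose transcendence is controlled by Chudnovsky's theorem on the algebraic independence of $\pi$ with $\Gamma(\tfrac14)$ and with $\Gamma(\tfrac13)$. First I would treat $s=\tfrac12+n$. The complex factor is $\zeta_{\mathbb{C}}(s+1)=\Gamma(n+\tfrac32)/\sqrt{2\pi}$; since $\Gamma(n+\tfrac32)$ is a rational multiple of $\sqrt{\pi}$ (the usual half-integer values), this factor collapses to a rational multiple of $1/\sqrt{2}$ and is therefore algebraic. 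This alone disposes of the totally imaginary case of (1): there $r_{1}=0$, so $G_{K}(\tfrac12+n)^{-1}=\zeta_{\mathbb{C}}(s+1)^{r_{2}}$ is a power of an algebraic number, whence $G_{K}(\tfrac12+n)\in\overline{\mathbb{Q}}$.

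For the non-totally-imaginary case ($r_{1}\geq1$) the algebraic complex factor is harmless, so transcendence of $G_{K}(\tfrac12+n)$ reduces to transcendence of $\zeta_{\mathbb{R}}(n+\tfrac52)=2^{(2n+3)/4}\,\Gamma(\tfrac{n}{2}+\tfrac54)/\sqrt{2\pi}$. Using $\Gamma(x+1)=x\Gamma(x)$, the argument $\tfrac{n}{2}+\tfrac54$ reduces, up to a positive rational factor, to $\tfrac14$ when $n$ is even and to $\tfrac34$ when $n$ is odd; the reflection formula $\Gamma(\tfrac14)\Gamma(\tfrac34)=\pi\sqrt{2}$ then trades $\Gamma(\tfrac34)$ for $\pi\sqrt{2}/\Gamma(\tfrac14)$. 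It therefore suffices to prove that $\Gamma(\tfrac14)/\sqrt{\pi}$ and $\sqrt{\pi}/\Gamma(\tfrac14)$ are transcendental, and both follow at once from Chudnovsky's algebraic independence of $\Gamma(\tfrac14)$ and $\pi$: an algebraic value of either quotient would yield a nontrivial polynomial relation between $\Gamma(\tfrac14)^{2}$ and $\pi$.

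Part (2) follows the same route but is cleaner, because $K$ totally imaginary forces $r_{1}=0$ and $r_{2}\geq1$, so $G_{K}(\tfrac13+n)^{-1}=\zeta_{\mathbb{C}}(n+\tfrac43)^{r_{2}}=\bigl(\Gamma(n+\tfrac43)/\sqrt{2\pi}\bigr)^{r_{2}}$. Reducing $\Gamma(n+\tfrac43)$ to a rational multiple of $\Gamma(\tfrac13)$ leaves exactly the transcendence of $\Gamma(\tfrac13)/\sqrt{\pi}$, which again follows from Chudnovsky's algebraic independence of $\Gamma(\tfrac13)$ and $\pi$. Since a nonzero integer power of a transcendental number is transcendental, $G_{K}(\tfrac13+n)\notin\overline{\mathbb{Q}}$.

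The one genuinely deep input is Chudnovsky's theorem; everything else is bookkeeping with the functional and reflection equations of $\Gamma$. The points deserving care, which I expect to be the only real obstacle, are purely organizational: tracking which quarter- or third-integer residue class the gamma argument lands in, checking that the algebraic prefactors (the powers of $2$, the rational multiples, and the $1/\sqrt{2}$ from the complex factor) are nonzero so that no accidental cancellation occurs, and confirming that raising to the exponent $r_{1}$ or $r_{2}$ cannot manufacture an algebraic number out of a transcendental one.
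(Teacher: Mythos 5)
Your proposal is correct and takes essentially the same route as the paper: substitute $G_{K}(s)^{-1}=\zeta_{\mathbb{R}}(s+2)^{r_{1}}\zeta_{\mathbb{C}}(s+1)^{r_{2}}$, observe that the complex factor at half-integer arguments is algebraic (killing the totally imaginary case of (1)) while at third-integer arguments it is a rational multiple of $\Gamma(\tfrac{1}{3})/\sqrt{2\pi}$, and invoke Chudnovsky's algebraic independence of $\pi$ with $\Gamma(\tfrac{1}{4})$, resp.\ $\Gamma(\tfrac{1}{3})$. The only cosmetic difference is that the paper reduces to $n\in\{0,1\}$ via the periodicity $G_{K}(s+2)=G_{K}(s)(s+1)^{-r_{1}}(s+2)^{-r_{1}-r_{2}}$, whereas you reduce the gamma arguments directly with $\Gamma(x+1)=x\Gamma(x)$ and the reflection formula --- the same computation organized differently.
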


Here we need algebraic independency results of Chudnovsky \cite{C}.

\section{Absolute zeta function: Proof of Theorem 1} We use the framework of \cite{KT3} \cite{KT4} concerning the absolute zeta functions (zeta functions over ${\mathbb F}_{1}$) and absolute automorphic forms. We refer to Soul\'{e} \cite{S} and Connes--Consani \cite{CC} for absolute zeta functions in general.

Let $f(x)$ be an absolute automorphic form
\[
f:{\mathbb R} \to {\mathbb C} \cup \{\infty\}
\]
satisfying
\[
f(\frac{1}{x})=C x^{-D} f(x)
\]
with $D \in {\mathbb Z}$ and $C=\pm 1$. We define the absolute Hurwitz zeta function
\[
 Z_{f}(w,s) = \frac{1}{\Gamma(w)}\int_{1}^{\infty} f(x)x^{-s-1}(\log x)^{w-1} dx
\]
and the absolute zeta function
\[
\zeta_{f}(s)=\zeta_{f/{\mathbb F}_{1}}(s)=\exp\Biggl( \frac{\partial}{\partial w} Z_{f}(w,s)\Biggl|_{w=0} \Biggl).
\]

\begin{proposition} Let
\[
f_{{\mathbb C}}(x) = \frac{1}{1-x^{-1}}
\]
and
\[
\zeta_{{\mathbb C}}(s) = \zeta_{f_{\mathbb C}/{\mathbb F}_{1}}(s).
\]
Then
\[
Z_{f_{\mathbb C}}(w,s) = \sum_{n=0}^{\infty}(n+s)^{-w}
\]
and
\[
\zeta_{{\mathbb C}}(s)=\biggl( \regprod_{n=0}^{\infty}(n+s) \biggl)^{-1} = \frac{\Gamma(s)}{\sqrt{2 \pi}}.
\]
\end{proposition}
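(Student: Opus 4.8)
The plan is to evaluate $Z_{f_{\mathbb C}}(w,s)$ directly from its integral definition by expanding $f_{\mathbb C}$ into a geometric series, recognize the outcome as the Hurwitz zeta function in the variable $w$, and then read off the closed form from the classical Lerch formula for its derivative at $w=0$. First I would use, for $x>1$, the expansion
\[
f_{\mathbb C}(x) = \frac{1}{1-x^{-1}} = \sum_{n=0}^{\infty} x^{-n},
\]
which converges uniformly on $[1+\varepsilon,\infty)$. Substituting into the definition and interchanging summation and integration (legitimate for $\mathrm{Re}(w)>1$ and $\mathrm{Re}(s)>0$, say by dominated convergence on $[1+\varepsilon,\infty)$ followed by $\varepsilon\to 0$) yields
\[
Z_{f_{\mathbb C}}(w,s) = \frac{1}{\Gamma(w)}\sum_{n=0}^{\infty}\int_{1}^{\infty} x^{-(n+s)-1}(\log x)^{w-1}\,dx.
\]
The substitution $u=\log x$ converts each inner integral into the Euler integral $\int_{0}^{\infty} e^{-(n+s)u}u^{w-1}\,du = \Gamma(w)(n+s)^{-w}$, so the factors of $\Gamma(w)$ cancel and I obtain $Z_{f_{\mathbb C}}(w,s)=\sum_{n=0}^{\infty}(n+s)^{-w}$. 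This is the first assertion, and it exhibits $Z_{f_{\mathbb C}}(w,s)$ as the Hurwitz zeta function $\zeta(w,s)$.

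For the second assertion I would simply unwind the two definitions. By construction $\zeta_{\mathbb C}(s)=\exp\left(\frac{\partial}{\partial w}Z_{f_{\mathbb C}}(w,s)\Big|_{w=0}\right)$, whereas the zeta-regularized product is $\regprod_{n=0}^{\infty}(n+s)=\exp\left(-\frac{\partial}{\partial w}Z_{f_{\mathbb C}}(w,s)\Big|_{w=0}\right)$; the two are mutually reciprocal, which gives the middle equality. The explicit value then follows from Lerch's formula
\[
\frac{\partial}{\partial w}\zeta(w,s)\Big|_{w=0} = \log\frac{\Gamma(s)}{\sqrt{2\pi}},
\]
so that $\zeta_{\mathbb C}(s)=\Gamma(s)/\sqrt{2\pi}$. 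I would cite this via the analytic continuation of the Hurwitz zeta function (or as in Manin \cite{M}) rather than reprove it.

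The main obstacle is concentrated entirely in this last analytic input: the geometric expansion, the term-by-term integration, and the Euler integral are all routine, and the passage from the half-plane $\mathrm{Re}(w)>1$ to a neighborhood of $w=0$ is the standard analytic continuation of $\zeta(w,s)$. The only genuinely nontrivial ingredient is the evaluation of $\frac{\partial}{\partial w}\zeta(w,s)|_{w=0}$, and since this is classical I would invoke it directly rather than derive it here.
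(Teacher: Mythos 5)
Your proposal is correct and follows essentially the same route as the paper's proof: expand $f_{{\mathbb C}}(x)=\sum_{n=0}^{\infty}x^{-n}$ for $x>1$, integrate term by term to identify $Z_{f_{\mathbb C}}(w,s)$ with the Hurwitz zeta function $\sum_{n=0}^{\infty}(n+s)^{-w}$, and conclude by Lerch's formula. Your additional justifications (dominated convergence for the interchange, the substitution $u=\log x$ giving the Euler integral, and the sign bookkeeping relating $\zeta_{{\mathbb C}}(s)$ to the reciprocal of the regularized product) merely make explicit what the paper leaves implicit.
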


\begin{proof} Since
\[
f_{{\mathbb C}}(x) = \sum_{n=0}^{\infty} x^{-n}
\]
for $x>1$ we know that
\begin{align*}
Z_{f_{\mathbb C}}(w,s) &= \frac{1}{\Gamma(w)} \int_{1}^{\infty} f_{{\mathbb C}}(x) x^{-s-1} (\log x)^{w-1} dx\\
&= \sum_{n=0}^{\infty} \frac{1}{\Gamma(w)} \int_{1}^{\infty} x^{-n-s-1} (\log x)^{w-1} dx\\
&= \sum_{n=0}^{\infty} (n+s)^{-w}.
\end{align*}
Hence, Lerch's formula gives
\[
\zeta_{{\mathbb C}}(s)=\biggl( \regprod_{n=0}^{\infty}(n+s) \biggl)^{-1} = \frac{\Gamma(s)}{\sqrt{2 \pi}}.
\]
\end{proof}

\begin{proposition} Let
\[
f_{{\mathbb R}}(x) = \frac{1}{1-x^{-2}}
\]
and
\[
\zeta_{{\mathbb R}}(s) = \zeta_{f_{{\mathbb R}}/{\mathbb F}_{1}}(s).
\]
Then
\[
Z_{f_{{\mathbb R}}}(w,s) = \sum_{n=0}^{\infty} (2n+s)^{-w}
\]
and
\[
\zeta_{{\mathbb R}}(s) = \biggl( \regprod_{n=0}^{\infty}(2n+s) \biggl)^{-1} = \frac{\Gamma(\frac{s}{2})}{\sqrt{2 \pi}} 2^{\frac{s-1}{2}}.
\]
\end{proposition}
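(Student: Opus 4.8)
The plan is to follow the template just established for $\zeta_{\mathbb{C}}$ in Proposition 1, inserting one extra bookkeeping step to account for the scaling by $2$. First I would expand $f_{\mathbb{R}}$ as a geometric series: for $x>1$ we have $f_{\mathbb{R}}(x)=(1-x^{-2})^{-1}=\sum_{n=0}^{\infty}x^{-2n}$. Substituting this into the defining integral for $Z_{f_{\mathbb{R}}}(w,s)$ and interchanging summation and integration (justified for $\mathrm{Re}(w)$ large, exactly as in Proposition 1), each term reduces to the standard evaluation
\[
\frac{1}{\Gamma(w)}\int_{1}^{\infty} x^{-(2n+s)-1}(\log x)^{w-1}\,dx = (2n+s)^{-w},
\]
which one checks by the substitution $u=\log x$. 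Summing over $n$ yields the first assertion, $Z_{f_{\mathbb{R}}}(w,s)=\sum_{n=0}^{\infty}(2n+s)^{-w}$.

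For the second assertion I would reduce to the Hurwitz zeta function by pulling out the factor of $2$. Writing $\zeta(w,a)=\sum_{n=0}^{\infty}(n+a)^{-w}$, we have $Z_{f_{\mathbb{R}}}(w,s)=2^{-w}\zeta(w,\tfrac{s}{2})$, so differentiating in $w$ and evaluating at $w=0$ by the product rule gives
\[
\frac{\partial}{\partial w}Z_{f_{\mathbb{R}}}(w,s)\Big|_{w=0} = -(\log 2)\,\zeta(0,\tfrac{s}{2}) + \zeta'(0,\tfrac{s}{2}),
\]
where $\zeta'$ denotes the derivative in the first variable. The step requiring care is precisely this one: unlike the $\zeta_{\mathbb{C}}$ case, the scaling produces a nonzero contribution from the constant term $\zeta(0,\tfrac{s}{2})$, and it is this term that manufactures the power of $2$ in the final closed form — so the result is not obtained merely by substituting $s\mapsto s/2$ into Proposition 1.

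To finish, I would insert the two classical evaluations $\zeta(0,a)=\tfrac{1}{2}-a$ and Lerch's formula $\zeta'(0,a)=\log\frac{\Gamma(a)}{\sqrt{2\pi}}$ (the latter being exactly what drives Proposition 1). With $a=\tfrac{s}{2}$ these yield
\[
\frac{\partial}{\partial w}Z_{f_{\mathbb{R}}}(w,s)\Big|_{w=0} = \frac{s-1}{2}\log 2 + \log\frac{\Gamma(\frac{s}{2})}{\sqrt{2\pi}} = \log\!\left( 2^{\frac{s-1}{2}}\,\frac{\Gamma(\frac{s}{2})}{\sqrt{2\pi}}\right).
\]
Exponentiating according to the definition $\zeta_{\mathbb{R}}(s)=\exp\bigl(\partial_{w}Z_{f_{\mathbb{R}}}(w,s)|_{w=0}\bigr)$ recovers the asserted formula, and since this exponential equals by definition $\bigl(\regprod_{n=0}^{\infty}(2n+s)\bigr)^{-1}$, both equalities in the statement follow simultaneously.
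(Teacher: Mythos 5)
Your proof is correct and follows essentially the same route as the paper's: geometric expansion of $f_{\mathbb{R}}$ with term-by-term integration for the first identity, then the factorization $Z_{f_{\mathbb{R}}}(w,s)=2^{-w}\sum_{n=0}^{\infty}(n+\tfrac{s}{2})^{-w}$ for the second. The paper compresses the final step into the phrase ``easily gives''; your explicit use of the product rule, $\zeta(0,a)=\tfrac{1}{2}-a$, and Lerch's formula is exactly the computation being elided there.
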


\begin{proof} Since
\[
f_{{\mathbb R}}(x) = \sum_{n=0}^{\infty} x^{-2n}
\]
for $x>1$ we know that
\begin{align*}
Z_{f_{\mathbb R}}(w,s) &= \frac{1}{\Gamma(w)} \int_{1}^{\infty} f_{{\mathbb R}}(x) x^{-s-1} (\log x)^{w-1} dx\\
&= \sum_{n=0}^{\infty} (2n+s)^{-w}.
\end{align*}
Hence
\[
\zeta_{{\mathbb R}}(s)=\biggl( \regprod_{n=0}^{\infty}(2n+s) \biggl)^{-1}.
\]
Moreover,
\[
Z_{f_{\mathbb R}}(w,s) = 2^{-w} \sum_{n=0}^{\infty} (n+\frac{s}{2})^{-w}
\]
easily gives
\[
\zeta_{{\mathbb R}}(s) = \frac{\Gamma(\frac{s}{2})}{\sqrt{2 \pi}} 2^{\frac{s-1}{2}}.
\]
\end{proof}

\par \underline{Proof of Theorem 1} We present two proofs for the relation $\zeta_{{\mathbb C}}(s)=\zeta_{{\mathbb R}}(s) \zeta_{{\mathbb R}}(s+1)$.
\begin{proof}[First proof] It is sufficient to show
\[
\biggl(\regprod_{n=0}^{\infty}(2n+s) \biggl) \biggl(\regprod_{n=0}^{\infty}(2n+1+s) \biggl)=\regprod_{n=0}^{\infty}(n+s)
\]
since the left hand side is $\zeta_{{\mathbb R}}(s)^{-1} \zeta_{{\mathbb R}}(s+1)^{-1}$ and the right hand side is $\zeta_{{\mathbb C}}(s)^{-1}$. By the way, the left hand side is
\[
\biggl(\regprod_{m \geq 0 \atop {\rm even}}(m+s) \biggl) \biggl(\regprod_{m \geq 0 \atop {\rm odd}} (m+s) \biggl)=\regprod_{m=0}^{\infty}(m+s),
\]
which is nothing but the right hand side. 
\end{proof}

\begin{proof}[Second proof] Remark that
\[
f_{{\mathbb R}}(x)(1+x^{-1})=f_{{\mathbb C}}(x).
\]
Hence we have
\begin{align*}
Z_{f_{\mathbb R}}(w,s) + Z_{f_{\mathbb R}}(w,s+1) &= \frac{1}{\Gamma(w)} \int_{1}^{\infty} f_{{\mathbb R}}(x) x^{-s-1} (\log x)^{w-1} dx\\ 
&+ \frac{1}{\Gamma(w)} \int_{1}^{\infty} f_{{\mathbb R}}(x) x^{-1} x^{-s-1} (\log x)^{w-1} dx\\
&=\frac{1}{\Gamma(w)} \int_{1}^{\infty} f_{{\mathbb R}}(x) (1+x^{-1}) x^{-s-1} (\log x)^{w-1} dx\\
&=\frac{1}{\Gamma(w)} \int_{1}^{\infty} f_{{\mathbb C}}(x) x^{-s-1} (\log x)^{w-1} dx\\
&=Z_{f_{\mathbb C}}(w,s).
\end{align*}
Thus
\begin{align*}
\zeta_{{\mathbb R}}(s) \zeta_{{\mathbb R}}(s+1) &= \exp\biggl( \frac{\partial}{\partial w} \biggl( Z_{f_{\mathbb R}}(w,s) + Z_{f_{\mathbb R}}(w,s+1) \biggl)\biggl|_{w=0} \biggl)\\
&= \exp\biggl( \frac{\partial}{\partial w} Z_{f_{\mathbb C}}(w,s) \biggl|_{w=0} \biggl)\\
&=\zeta_{{\mathbb C}}(s).
\end{align*}
\end{proof}

\begin{proof}[Proof of the equivalence $\zeta_{{\mathbb R}}(s) \zeta_{{\mathbb R}}(s+1)=\zeta_{{\mathbb C}}(s) \Leftrightarrow \Gamma_{{\mathbb R}}(s) \Gamma_{{\mathbb R}}(s+1)=\Gamma_{{\mathbb C}}(s)$]
Notice that 
\[
\Gamma_{{\mathbb R}}(s) = \zeta_{{\mathbb R}}(s) 2^{1-\frac{s}{2}} \pi^{\frac{1-s}{2}}
\]
and
\[
\Gamma_{{\mathbb C}}(s) = \zeta_{{\mathbb C}}(s) 2^{\frac{3}{2}-s} \pi^{\frac{1}{2}-s}.
\]
Hence
\[
\Gamma_{{\mathbb R}}(s) \Gamma_{{\mathbb R}}(s+1)=\zeta_{{\mathbb R}}(s) \zeta_{{\mathbb R}}(s+1) 2^{\frac{3}{2}-s} \pi^{\frac{1}{2}-s}.
\]
Thus we get
\[
\frac{\Gamma_{{\mathbb R}}(s) \Gamma_{{\mathbb R}}(s+1)}{\Gamma_{{\mathbb C}}(s) }=\frac{\zeta_{{\mathbb R}}(s) \zeta_{{\mathbb R}}(s+1)}{\zeta_{{\mathbb C}}(s)}.
\]
Hence
\[
\zeta_{{\mathbb C}}(s)=\zeta_{{\mathbb R}}(s) \zeta_{{\mathbb R}}(s+1) \Leftrightarrow \Gamma_{{\mathbb C}}(s)=\Gamma_{{\mathbb R}}(s) \Gamma_{{\mathbb R}}(s+1).
\]
\end{proof}

\begin{remark} This relation equivalent to the duplication formula for the gamma function also.
\end{remark}

\section{Calculation of $G_{K}(s)$: Proof of Theorem 2}
\begin{proof}[Proof of Theorem 2]
We start from the result in \cite{KT2}:
\[
G_{K}(s)^{-1} = \biggl( 2^{\frac{s}{2}} \frac{\Gamma(\frac{s+2}{2})}{\sqrt{\pi}} \biggl)^{r_{1}+r_{2}} \biggl( 2^{\frac{s}{2}} \frac{\Gamma(\frac{s+1}{2})}{\sqrt{2\pi}} \biggl)^{r_{2}}.
\]
Then we see easily that
\begin{align*}
G_{K}(s)^{-1} &= \zeta_{{\mathbb R}}(s+2)^{r_{1}+r_{2}} \zeta_{{\mathbb R}}(s+1)^{r_{2}}\\
&= \zeta_{{\mathbb R}}(s+2)^{r_{1}} (\zeta_{{\mathbb R}}(s+2) \zeta_{{\mathbb R}}(s+1))^{r_{2}}.
\end{align*}
Hence Theorem 1 gives
\[
G_{K}(s)^{-1} = \zeta_{{\mathbb R}}(s+2)^{r_{1}} \zeta_{{\mathbb C}}(s+1)^{r_{2}}.
\]
\end{proof}

\section{Proof of Theorem 3}
\begin{proof}[Proof of Theorem 3]
\par (1) Let $n \geq 0$ be an even integer. Then
\[
G_{K}(n)^{-1} = \zeta_{{\mathbb R}}(n+2)^{r_{1}} \zeta_{{\mathbb C}}(n+1)^{r_{2}},
\]
where
\[
\zeta_{{\mathbb R}}(n+2) = \frac{\Gamma(\frac{n+2}{2})}{\sqrt{2 \pi}} 2^{\frac{n+1}{2}} \in \overline{{\mathbb Q}}^{\times} \cdot \frac{1}{\sqrt{\pi}}
\]
and
\[
\zeta_{{\mathbb C}}(n+1) = \frac{\Gamma(n+1)}{\sqrt{2 \pi}} \in \overline{{\mathbb Q}}^{\times} \cdot \frac{1}{\sqrt{\pi}}.
\]
Hence
\[
G_{K}(n) \in \overline{{\mathbb Q}}^{\times} \cdot \pi^{\frac{r_{1}+r_{2}}{2}}.
\]
Thus
\[
G_{K}(n) \notin \overline{{\mathbb Q}}.
\]
\par (2) Let $n \geq 0$ be an odd integer. Then
\[
G_{K}(n)^{-1} = \zeta_{{\mathbb R}}(n+2)^{r_{1}} \zeta_{{\mathbb C}}(n+1)^{r_{2}},
\]
where
\[
\zeta_{{\mathbb R}}(n+2) = \frac{\Gamma(\frac{n+2}{2})}{\sqrt{2 \pi}} 2^{\frac{n+1}{2}} \in \overline{{\mathbb Q}}^{\times} 
\]
and
\[
\zeta_{{\mathbb C}}(n+1) = \frac{\Gamma(n+1)}{\sqrt{2 \pi}} \in \overline{{\mathbb Q}}^{\times} \cdot \frac{1}{\sqrt{\pi}}.
\]
Hence
\[
G_{K}(n) \in \overline{{\mathbb Q}}^{\times} \cdot \pi^{\frac{r_{2}}{2}}.
\]
Thus
\[
G_{K}(n) \notin \overline{{\mathbb Q}} \Leftrightarrow r_{2} \geq 1.
\]
\end{proof}

\begin{example}$\;$
\par (a) $G_{K}(0)={\rm det}(-{\mathcal R}) = 2^{\frac{r_{2}}{2}} \pi^{\frac{r_{1}+r_{2}}{2}} \notin \overline{{\mathbb Q}}$.
\par (b) $G_{K}(1)={\rm det}(I-{\mathcal R}) = 2^{\frac{r_{1}+r_{2}}{2}} \pi^{\frac{r_{2}}{2}} \notin \overline{{\mathbb Q}} \Leftrightarrow r_{2} \geq 1$.
\par (c) 
\begin{align*}
G_{K}(0)G_{K}(1) &= {\rm det}(-{\mathcal R}) {\rm det}(I-{\mathcal R})\\
&=2^{\frac{r_{1}+2r_{2}}{2}} \pi^{\frac{r_{1+2}r_{2}}{2}}\\
&=(\sqrt{2 \pi})^{r_{1}+2r_{2}}\\
&=(\sqrt{2 \pi})^{[K:{\mathbb Q}]} \notin \overline{{\mathbb Q}}.
\end{align*}
\end{example}

\begin{remark}$\;$
\par (1) $\zeta_{{\mathbb R}}(n) \in \overline{{\mathbb Q}}^{\times}$ for an odd integer $n \geq 1$.
\par (2) $\zeta_{{\mathbb R}}(n) \in \overline{{\mathbb Q}}^{\times} \cdot \frac{1}{\sqrt{\pi}}$ for an even integer $n \geq 2$.
\par (3) $\zeta_{{\mathbb C}}(n) \in \overline{{\mathbb Q}}^{\times} \cdot \frac{1}{\sqrt{\pi}}$ for an integer $n \geq 1$.
\end{remark}

\section{Transcendency for $s=\frac{1}{2}+n$, $\frac{1}{3}+n$: Proof of Theorem 4}
\begin{proof}[Proof of Theorem 4]
We notice that it is sufficient to look at $n=0,1$ since the ``periodicity'' of $G_{K}(s):G_{K}(s+2)=G_{K}(s)(s+1)^{-r_{1}}(s+2)^{-r_{1}-r_{2}}$.
\par (1) $G_{K}(\frac{1}{2}+n)$
\par Look at
\[
G_{K}(\frac{1}{2}+n)^{-1} = \zeta_{{\mathbb R}}(\frac{1}{2}+n+2)^{r_{1}} \zeta_{{\mathbb C}}(\frac{1}{2}+n+1)^{r_{2}},
\]
where
\begin{align*}
&\zeta_{{\mathbb R}}(\frac{1}{2}+n+2) = \frac{\Gamma(\frac{1}{4}+\frac{n}{2}+1)}{\sqrt{2 \pi}} 2^{\frac{2n+3}{4}} \notin \overline{{\mathbb Q}}\\
&\left\{
\begin{array}{cc}
n=0: & \zeta_{{\mathbb R}}(\frac{1}{2}+2)=2^{-\frac{7}{4}} \cdot \frac{\Gamma(\frac{1}{4})}{\sqrt{\pi}},\\
n=1: & \zeta_{{\mathbb R}}(\frac{1}{2}+3)=3 \cdot 2^{-\frac{3}{4}} \cdot \frac{\sqrt{\pi}}{\Gamma(\frac{1}{4})}
\end{array}
\right.
\end{align*}
by the result of Chudnovsky \cite{C} (the algebraic independency of $\pi$ and $\Gamma(\frac{1}{4})$), and
\[
\zeta_{{\mathbb C}}(\frac{1}{2}+n+1)=\frac{\Gamma(\frac{1}{2}+n+1)}{\sqrt{2 \pi}} \in \overline{{\mathbb Q}}^{\times}.
\]
Hence, if $r_{1} \geq 1$ (i.e., non--totally imaginary $K$), we know that $G_{K}(\frac{1}{2}+n) \notin \overline{{\mathbb Q}}$. Moreover if $r_{1}=0$ (i.e., totally imaginary $K$), $G_{K}(\frac{1}{2}+n) \in \overline{{\mathbb Q}}$.
\par (2) $G_{K}(\frac{1}{3}+n)$
\par Look at
\[
G_{K}(\frac{1}{3}+n)^{-1} = \zeta_{{\mathbb R}}(\frac{1}{3}+n+2)^{r_{1}} \zeta_{{\mathbb C}}(\frac{1}{3}+n+1)^{r_{2}},
\]
where
\[
\zeta_{{\mathbb R}}(\frac{1}{3}+n+2) = \frac{\Gamma(\frac{1}{6}+\frac{n+2}{2})}{\sqrt{2 \pi}} 2^{\frac{2}{3}+\frac{n}{2}}
\]
and
\[
\zeta_{{\mathbb C}}(\frac{1}{3}+n+1) = \frac{\Gamma(\frac{1}{3}+n+1)}{\sqrt{2 \pi}}.
\]
We see that 
\[
\zeta_{{\mathbb C}}(\frac{1}{3}+n+1) \notin \overline{{\mathbb Q}}
\]
by the result of Chudnovsky \cite{C} (the algebraic independency of $\pi$ and $\Gamma(\frac{1}{3})$). [We do not know the transcendental nature of $\zeta_{{\mathbb R}}(\frac{1}{3}+n+2)$.] Hence, if $r_{1} = 0$ (i.e., totally imaginary $K$), then we know that $G_{K}(\frac{1}{3}+n) \notin \overline{{\mathbb Q}}$.
\end{proof}

\end{document}